\theoremstyle{plain}
\newtheorem{theorem}{Theorem}
\newtheorem{claim}{Claim}
\newtheorem{conjecture}{Conjecture}
\newtheorem{lemma}[theorem]{Lemma}
\begin{document}

\title{Large induced subgraphs with $k$ vertices of almost maximum degree}
\author{Ant\'onio Gir\~ao \thanks{Department of Pure Mathematics and Mathematical Statistics, University of Cambridge, Cambridge, UK; \texttt{A.Girao@dpmms.cam.ac.uk}} 
        \and Kamil Popielarz \thanks{Department of Mathematics, University of Memphis, Memphis, Tennessee; \texttt{kamil.popielarz@gmail.com}}}
\maketitle

\abstract{
    In this note we prove that for every integer $k$, there exist constants $g_{1}(k)$ and $g_{2}(k)$ such that the following holds.
    If $G$ is a graph on $n$ vertices with maximum degree $\Delta$ then it contains an induced subgraph $H$ on at least $n - g_{1}(k)\sqrt{\Delta}$ vertices, such that $H$ has $k$ vertices of the same degree of order at least $\Delta(H)-g_{2}(k)$.
    This solves a conjecture of Caro and Yuster up to the constant $g_{2}(k)$.
}

\section{Introduction}
Given a graph $G$, let the repetition number, denoted by $rep(G)$, be the maximum multiplicity of a vertex degree. 
Trivially, any graph $G$ of order at least two contains at least two vertices of the same degree, i.e. $rep(G)\geq 2$.
This parameter has been widely studied by several researchers (e.g., \cite{Morayne, Bscott, WestCaro, ChenSchelp, Erdos}), in particular, by Bollob\' as and Scott, who showed that for every $k\geq 2$ there exist triangle-free graphs on $n$ vertices with $rep(G)\leq k$ for which $\alpha(G)= (1+o(1))n/k$ (\cite{Bscott}).
As there are infinitely many graphs having repetition number two, it is natural to ask what is the smallest number of vertices one needs to delete from a graph in order to increase the repetition number of the remaining induced subgraph.
This question was partially answered by Caro, Shapira and Yuster in \cite{YusterShapiraCaro}, indeed, they proved that for every $k$ there exists a constant $C(k)$ such that given any graph on $n$ vertices one needs to remove at most $C(k)$ vertices and thus obtain an induced subgraph with at least $\min\{k, n-C(k)\}$ vertices of the same degree. 
Related to this question, Caro and Yuster (\cite{CaroYuster}) considered the problem of finding the largest induced subgraph $H$ of a graph $G$ which contains at least $k$ vertices of degree $\Delta(H)$.
To do so they defined $f_k(G)$ to be the smallest number of vertices one needs to remove from a graph $G$ such that the remaining induced subgraph has its maximum degree attained by at least $k$ vertices. 
They found examples of graphs on $n$ vertices for which $f_2(G)\geq (1-o(1))\sqrt{n}$ and conjectured $f_k(G) \le O(\sqrt{n})$ for every graph $G$ on $n$ vertices. 
In the same paper they established the conjecture for $k \le 3$.

The following more general conjecture was posed recently by Caro, Lauri and Zarb in \cite{CaroMalta}.

\begin{conjecture}
    \label{conjecture_1}
 For every $k \geq 2$ there is a constant $g(k)$ such that given a graph $G$ with maximum degree $\Delta$, one can remove at most $g(k)\sqrt{\Delta}$ vertices such that the remaining subgraph $H\subseteq G$ has at least $k$ vertices of degree $\Delta(H)$. 
\end{conjecture}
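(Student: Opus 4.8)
The plan is to pin down a single target degree $d^\ast$ lying within $O(\sqrt{\Delta})$ of $\Delta$, designate $k$ vertices of degree $d^\ast$ as \emph{anchors}, and then delete $O(\sqrt{\Delta})$ vertices so that $d^\ast$ becomes the maximum degree of the surviving graph while at least $k$ anchors still realise it \emph{exactly}. First, if $G$ already has $k$ vertices of degree $\Delta$ we take $H=G$ and are done. Otherwise, writing $a_d$ for the number of vertices of degree exactly $d$, I would examine the window $W=[\Delta-\lceil\sqrt{\Delta}\rceil,\Delta]$, which contains $\lceil\sqrt{\Delta}\rceil+1$ integer values. In the \emph{clustered} regime $\sum_{d\in W}a_d>k(\lceil\sqrt{\Delta}\rceil+1)$, averaging forces some value in $W$ to be attained more than $k$ times; let $d^\ast$ be the largest degree value attained by at least $k$ vertices. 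Then $d^\ast\in W$, so $d^\ast\ge\Delta-\lceil\sqrt{\Delta}\rceil$, and by maximality every $d>d^\ast$ has $a_d\le k-1$. Since the degrees exceeding $d^\ast$ range over at most $\lceil\sqrt{\Delta}\rceil$ values, the set $U:=\{v:\deg_G v>d^\ast\}$ of vertices lying above the target satisfies $|U|\le (k-1)\lceil\sqrt{\Delta}\rceil=O(\sqrt{\Delta})$.

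Given this, the reduction step is cheap in raw count: deleting all of $U$ costs $O(\sqrt{\Delta})$ vertices, and since every vertex outside $U$ had degree at most $d^\ast$ in $G$, the surviving graph $H$ automatically satisfies $\Delta(H)\le d^\ast$. The complementary \emph{spread} regime $\sum_{d\in W}a_d\le k(\lceil\sqrt{\Delta}\rceil+1)$ is the one that needs more care, since no level in $W$ is heavily hit; there I would either target the $k$-th largest degree and reduce the at most $k-1$ vertices above it (cheap when that degree lies in $W$, as for a long staircase near the top, where the cost to pull the top $k$ vertices down to a common value is only $\binom{k}{2}$), or, when the top is few-but-tall (as in a star), delete the top vertices outright and let the maximum fall to a level the bulk already populates. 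In every case the number of deletions can be held to $O(\sqrt{\Delta})$; the delicate point is arranging the surviving maximum to be attained $k$ times.

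The crux, and the step I expect to be the main obstacle, is precisely this \emph{exactness}. Having $\Delta(H)\le d^\ast$ is easy; what I must in addition secure is that at least $k$ anchors lose \emph{no} neighbour to the deleted set, so that $k$ of them keep degree exactly $d^\ast$ and no survivor exceeds it. This is exactly where a naive argument only controls degrees up to an additive slack and thereby yields the weaker statement with $\Delta(H)-g_2(k)$: the deletions that flatten $U$ fall inside anchor neighbourhoods and depress the anchors below $d^\ast$ by an uncontrolled amount. To recover the exact value I would exploit that the clustered regime provides a surplus of anchors and that the edges between the anchors and the top set are limited, and then choose, for each vertex of $U$ that must be reduced, \emph{which} of its neighbours to delete, so as to leave $k$ anchor neighbourhoods entirely untouched. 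Casting this as a system-of-distinct-representatives or integral-flow problem and verifying a Hall-type condition — that no collection of high vertices is forced to spend all of its permissible reductions inside the protected anchor neighbourhoods — is where the genuine work lies. Guaranteeing these \emph{simultaneous} exact constraints within the $O(\sqrt{\Delta})$ budget is the hard part, and it is exactly the gap separating the full conjecture from its additive-constant relaxation.
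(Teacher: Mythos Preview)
This statement is Conjecture~\ref{conjecture_1}, which the paper does \emph{not} prove; it is explicitly left open (see the closing remark of Section~\ref{constructions}). The paper establishes only the relaxed Theorem~\ref{theoremsamedegree}, allowing the $k$ equal degrees to fall short of $\Delta(H)$ by an additive constant $g_{2}(k)$. So there is no proof in the paper to compare your attempt against.

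Your proposal is likewise not a proof, and you say so yourself: you isolate the exactness step as ``the main obstacle'' and ``exactly the gap separating the full conjecture from its additive-constant relaxation'', and then gesture at a Hall/SDR formulation without verifying any Hall condition. That verification is not a formality. In your clustered regime nothing prevents every neighbour of some $u\in U$ from lying among the anchors, so reducing $u$ necessarily damages anchor neighbourhoods; you give no counting argument guaranteeing that $k$ anchors can be shielded while all of $U$ is pulled down to $d^{\ast}$ within an $O(\sqrt{\Delta})$ budget. Your spread regime is sketchier still: the two sub-cases you name (a staircase near the top; a star-like top) are not exhaustive, and the bare assertion that the $O(\sqrt{\Delta})$ budget always suffices for exact equalisation is precisely the open question. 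For what it is worth, the paper's route to the \emph{approximate} result is structurally quite different from your degree-window pigeonhole --- it goes through irreducible uniform covers, an iterative bipartite reduction controlling how deletions reorder the top $k$ degrees, then Ramsey plus a vector-sum subsequence lemma --- but neither approach currently achieves $g_{2}(k)=0$.
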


Let us define $g(k,\Delta)=\max \{ f_k(G): \Delta(G)\leq \Delta\}$. 
In the same paper, they proved that $g(2,\Delta)=\lceil \frac{−3+ \sqrt{8\Delta+1}}{2} \rceil $ and stated that $g(3,\Delta)\leq 42\sqrt{\Delta}$. 
We should point out that, if true, the conjecture is best possible, as there are graphs on $n$ vertices found in \cite{CaroMalta} for which any induced subgraph on more than $n-\frac{k}{2}\sqrt{\Delta}$ does not contain $k$ vertices of the same maximum degree. 
We shall present such constructions in Section \ref{constructions}.

In this note we prove the following approximate version of Conjecture~\ref{conjecture_1}.
\begin{theorem}\label{theoremsamedegree}
    For every positive integer $k$, there exist constants $g_1(k)$ and $g_2(k)$ such that the following holds.
    If $G$ is a graph on $n$ vertices with maximum degree $\Delta$ then it contains an induced subgraph $H$ on at least $n - g_1(k)\sqrt{\Delta}$ vertices, such that $H$ has $k$ vertices of the same degree at least $\Delta(H) - g_2(k)$.
\end{theorem}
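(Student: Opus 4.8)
The plan is to induct on the maximum degree $\Delta$, repeatedly peeling off a handful of very-high-degree vertices until the top of the degree sequence becomes tame, and then to collapse the top $k$ degrees to a common value by one small, carefully chosen deletion. Throughout write $N(v)$ for the neighbourhood of $v$ in the current graph.

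First I would record the elementary pigeonhole that anchors the argument: if at least $(k-1)(g_2(k)+1)+1$ vertices of $G$ have degree in the window $[\Delta-g_2(k),\Delta]$, then $k$ of them share a common degree $d\ge \Delta-g_2(k)=\Delta(G)-g_2(k)$, and we may take $H=G$. A companion fact, to be used when $\Delta$ is bounded in terms of $k$, is the deterministic process that repeatedly deletes the set of current maximum-degree vertices whenever there are fewer than $k$ of them: the maximum degree strictly drops at each step, so after at most $\Delta+1$ steps we reach an induced subgraph with $k$ vertices of the same, maximum, degree, having deleted fewer than $k(\Delta+1)$ vertices — affordable when $g_1(k)$ is large compared with $\Delta$.

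So assume $\Delta$ is large and fewer than $(k-1)(g_2(k)+1)$ vertices lie in the top window. Let $v_1,\dots,v_k$ be $k$ vertices of largest degree, with degrees $d_1\ge\cdots\ge d_k$ (so $d_1=\Delta$ and every vertex outside $\{v_1,\dots,v_k\}$ has degree at most $d_k$), and look at the consecutive gaps $d_i-d_{i+1}$. If some gap $d_j-d_{j+1}$ exceeds $g_1(k)k^{-2}\sqrt{\Delta}$, delete $v_1,\dots,v_j$ (at most $k-1$ vertices) and recurse on the resulting graph, whose maximum degree is at most $d_{j+1}<\Delta-g_1(k)k^{-2}\sqrt\Delta$. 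By concavity of $\sqrt{\,\cdot\,}$, each such step decreases $\sqrt{\Delta}$ by more than the fixed constant $g_1(k)/(2k^2)$, so there are at most $2k^2\sqrt{\Delta}/g_1(k)$ such steps, costing in total fewer than $2k^3\sqrt{\Delta}/g_1(k)$ deletions; taking $g_1(k)$ a sufficiently large polynomial in $k$ keeps the running total below $g_1(k)\sqrt\Delta$. The process terminates either in one of the two base cases above, or in the remaining ``clustered'' case, in which all of $d_1-d_2,\dots,d_{k-1}-d_k$ are at most $g_1(k)k^{-2}\sqrt\Delta$. There I would delete a set $S$, disjoint from $\{v_1,\dots,v_k\}$, with $|N(v_i)\cap S|=d_i-d_k$ for $i<k$ and $|N(v_k)\cap S|=0$: then $v_1,\dots,v_{k-1}$ drop to degree $d_k$, $v_k$ is untouched, every other vertex had degree at most $d_k$ and can only have decreased, so $v_1,\dots,v_k$ become $k$ vertices of the same maximum degree in $G-S$, with $|S|\le\sum_{i<k}(d_i-d_k)=\sum_{j<k}j(d_j-d_{j+1})\le\binom{k}{2}g_1(k)k^{-2}\sqrt\Delta<\tfrac12 g_1(k)\sqrt\Delta$. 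Existence of such an integral $S$ is a transportation/flow feasibility question; the constraint matrix (the bipartite incidence matrix of $\{v_1,\dots,v_k\}$ against the rest) is totally unimodular, so an integral solution exists iff the fractional relaxation does, which by max-flow/min-cut reduces to the Hall-type inequality $|N(I)\setminus(\{v_1,\dots,v_k\}\cup N(v_k))|\ge\sum_{i\in I}(d_i-d_k)$ for every $I\subseteq\{v_1,\dots,v_{k-1}\}$.

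The main obstacle is exactly this Hall-type inequality. It can fail only when the neighbourhoods of the top-degree vertices overlap very heavily — for instance when $N(v_1)$ is almost contained in $N(v_k)$ — and in that regime one cannot pull the $v_i$ down to a single common degree: the best one can do is force $k$ vertices into a window of degrees of width $O_k(1)$ about a near-maximum value. This is precisely why the theorem is proved only up to the additive constant $g_2(k)$. One must (i) choose $g_2(k)$ large enough that this unavoidable $O_k(1)$ slack can be absorbed, and (ii) show that whenever the clean bring-down fails, the heavy neighbourhood overlap itself produces enough structure — many vertices of nearly equal high degree, or a common neighbourhood large enough to exploit — to recover $k$ vertices of \emph{exactly} equal degree within $g_2(k)$ of $\Delta(H)$ after deleting only $O_k(\sqrt\Delta)$ vertices. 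Carrying out (ii), and in particular breaking the apparent circularity between the flow-feasibility threshold (which scales with $g_1(k)$) and the bounded-$\Delta$ base case, is the technical heart of the argument.
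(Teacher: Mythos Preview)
Your reduction to the ``clustered'' configuration --- repeatedly peeling off the top $j<k$ vertices whenever some gap $d_j-d_{j+1}$ exceeds $g_1(k)k^{-2}\sqrt\Delta$ --- is fine and parallels Claim~\ref{claim_1} in the paper's proof of Theorem~\ref{theorem_1}. The gap is exactly where you locate it. Your transportation scheme seeks $S\subseteq V\setminus\{v_1,\dots,v_k\}$, disjoint from $N(v_k)$, with $|N(v_i)\cap S|=d_i-d_k$ for each $i<k$; but (i) the relevant matrix is the \emph{bi-adjacency} matrix of $\{v_1,\dots,v_{k-1}\}$ against $B\setminus N(v_k)$, which is not totally unimodular in general (e.g.\ $J_3-I_3$ has determinant $2$), so the passage from fractional to integral feasibility is already unjustified, and (ii) more seriously, even fractional feasibility fails outright when, say, $N(v_1)\subseteq N(v_k)$. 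You acknowledge this and defer to the assertion that ``heavy overlap itself produces enough structure'' to recover $k$ equal near-maximum degrees, but no mechanism is supplied; the circularity you flag between $g_1(k)$ and the feasibility threshold is real and unresolved. As written, the argument stops here.

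The paper closes this gap with two ingredients absent from your outline. First, rather than equalising in one shot, it proves (Lemmas~\ref{lemma_cover}--\ref{lemma_bipartite}, via a Dickson-type finiteness argument on ``irreducible uniform covers'') that from any bipartite graph on parts $A=\{x_1,\dots,x_k\}$ and $B$ one can delete a set $W\subseteq B$ of size at most $f'(k)$ so that whenever $d(x_i)>d(x_j)$ the drop in $d(x_i)$ strictly exceeds that in $d(x_j)$; iterating this against the current top $k$ vertices shrinks $r_k=d_1-d_k$ by at least one each round at cost $O_k(1)$, driving $r_k$ from $\sqrt\Delta$ down to $O_k(1)$ after $O_k(\sqrt\Delta)$ deletions in total, with no Hall-type hypothesis needed. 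Second, for the final exact equalisation the paper runs the above with $R(k)$ in place of $k$, applies Ramsey to extract $k$ vertices among the top $R(k)$ spanning a clique or an independent set, and then invokes the Caro--Shapira--Yuster subsequence theorem (Theorem~\ref{lemma_caro}) to delete a further $O_k(1)$ vertices making those $k$ degrees exactly equal and within $g_2(k)$ of $\Delta(H)$. These two devices --- the cover lemma and the Ramsey/subsequence finish --- are what your proposal is missing.
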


\section{Proofs}

Given a partition of $\{1,2,\ldots,n\}$ into $t$ sets, $A_1,A_2\ldots, A_t$, and a strictly decreasing sequence of non-negative integers  $r_1>r_2>r_3>\ldots>r_t$, we say $\mathcal{A}$ is an $(r_1,r_2,\ldots, r_t)$-uniform cover of $\{1,2\ldots, n\}$ if $\mathcal{A}$ is a multiset of subsets of $\{1,2,\ldots n\}$ such that, whenever $i \in \left\{ 1, \dots, t \right\}$ and $j \in A_{i}$, we have $|\left\{ A \in \mathcal{A}: j \in A \right\}| = r_{i}$.
Note that $\mathcal{A}$ is a multiset, hence we allow repetitions.

We call an $(r_1,r_2,\ldots,r_t)$-uniform cover $\mathcal{A}$ of $\{1,2,\ldots, n\}=A_1 \cup A_2\cup \ldots \cup A_t$ \textit{irreducible} if there is no proper $(r^{\prime}_1,\ldots,r^{\prime}_t)$-uniform cover $\mathcal{B} \subset \mathcal{A}$, for some strictly decreasing sequence of non-negative integers $r^{\prime}_1>r^{\prime}_2>\ldots>r^{\prime}_t$.

Given a uniform cover $\mathcal{A}$ of $\{1,2,\ldots,n\}$ and a subset $B\subseteq \{1,2,\ldots,n\}$ we define $w_{\mathcal{A}}(B)$ to be the number of times $B$ appears in $\mathcal{A}$.

\begin{lemma}
    \label{lemma_cover}
For all $n\in \mathbb{N}$, there exists $f(n)$ such that for any $1\leq t \leq n$ and any partition of $\{1,2,\ldots,n\}$ into $t$ sets $A_1,A_2,\ldots,A_t$, every $(r_1,r_2,\ldots,r_t)$-uniform cover $\mathcal{A}$ of $\{1,2,\ldots,n\}$ contains a proper $(r^{\prime}_1,r^{\prime}_2,\ldots,r^{\prime}_t)$-uniform sub-cover $\mathcal{B}\subset \mathcal{A}$ with $r^{\prime}_1\leq f(n)$.
\end{lemma}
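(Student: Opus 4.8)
The plan is to identify each multiset of subsets of $\{1,\dots,n\}$ with an integer vector, to recognise irreducible covers as the \emph{minimal} elements of a certain subset of $\mathbb{Z}_{\ge 0}^{N}$ with $N=2^{n}$, and then to apply Dickson's lemma. In detail: identify a multiset $\mathcal{A}$ of subsets of $\{1,\dots,n\}$ with its multiplicity vector $w_{\mathcal{A}}=\bigl(w_{\mathcal{A}}(B)\bigr)_{B\subseteq\{1,\dots,n\}}\in\mathbb{Z}_{\ge 0}^{N}$; then ``$\mathcal{B}$ is a sub-multiset of $\mathcal{A}$'' means exactly $w_{\mathcal{B}}\le w_{\mathcal{A}}$ coordinatewise, and the inclusion is proper iff moreover $w_{\mathcal{B}}\neq w_{\mathcal{A}}$. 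Fix a partition $\mathcal{P}=(A_1,\dots,A_t)$ of $\{1,\dots,n\}$ into $t\le n$ nonempty parts, and let $S_{\mathcal{P}}\subseteq\mathbb{Z}_{\ge 0}^{N}$ be the set of those $w$ for which the coverage $\sum_{B\ni p}w(B)$ depends only on the part $A_i$ containing $p$ --- say it equals $\rho_i$ whenever $p\in A_i$ --- and moreover $\rho_1>\rho_2>\dots>\rho_t$. By definition $w\in S_{\mathcal{P}}$ precisely when the associated multiset is an $(\rho_1,\dots,\rho_t)$-uniform cover of $\mathcal{P}$, and such a cover is irreducible precisely when $w$ is a \emph{minimal} element of $S_{\mathcal{P}}$.

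Now I would invoke Dickson's lemma: since $(\mathbb{Z}_{\ge 0}^{N},\le)$ is a well-quasi-order, every subset of it --- in particular $S_{\mathcal{P}}$ --- has only finitely many minimal elements. Hence $\mathcal{P}$ admits only finitely many irreducible uniform covers; let $f(\mathcal{P})$ be the largest top coverage $\rho_1$ occurring among them (and $0$ if $S_{\mathcal P}=\varnothing$). As $\{1,\dots,n\}$ has only finitely many partitions into $\le n$ parts, $f(n):=\max_{\mathcal{P}}f(\mathcal{P})$ is finite, and by construction every irreducible uniform cover of every partition of $\{1,\dots,n\}$ has $r_1\le f(n)$.

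To conclude, let $\mathcal{A}$ be an $(r_1,\dots,r_t)$-uniform cover of a partition $\mathcal{P}$ of $\{1,\dots,n\}$. By Dickson's lemma $w_{\mathcal{A}}$ dominates some minimal element of $S_{\mathcal{P}}$, i.e.\ $\mathcal{A}$ contains an irreducible sub-cover $\mathcal{B}\subseteq\mathcal{A}$, whose top coverage satisfies $r_1'\le f(n)$. If $\mathcal{B}\subsetneq\mathcal{A}$ this is the required proper sub-cover; if instead $\mathcal{B}=\mathcal{A}$ then $\mathcal{A}$ is itself irreducible and already satisfies $r_1\le f(n)$, so one may assume from the start that $r_1>f(n)$, in which case $\mathcal{A}$ is not irreducible, hence contains \emph{some} proper sub-cover, and iterating the reduction --- which strictly decreases $|\mathcal{A}|$ --- yields a proper irreducible sub-cover with top coverage at most $f(n)$.

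I do not expect a genuine obstacle here beyond spotting this reformulation: the entire content is the statement that a subset of $\mathbb{Z}_{\ge 0}^{N}$ has finitely many minimal elements, and one does not even need the (true) fact that $S_{\mathcal{P}}$ is closed under multiset union. The one mild drawback is that the argument is non-constructive and supplies no explicit value of $f(n)$; if an explicit bound were wanted one could instead argue directly that any cover with $r_1$ sufficiently large admits a proper sub-cover (for instance by exploiting a subset occurring with large multiplicity, or a repeated constant-on-parts pattern inside $\mathcal{A}$), but this refinement is not needed for the statement as given.
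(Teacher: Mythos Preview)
Your argument is correct and is essentially the same as the paper's: both show that for each partition there are only finitely many irreducible covers and take $f(n)$ to be the largest top coverage among them. The only difference is packaging---you invoke Dickson's lemma by name, whereas the paper reproves exactly the instance needed via the classical ``pass to a non-decreasing subsequence in each coordinate'' argument (which is, of course, the standard proof of Dickson's lemma). Your treatment of the word ``proper'' in the final paragraph is in fact slightly more careful than the paper's, which tacitly allows $\mathcal{B}=\mathcal{A}$ when $\mathcal{A}$ is already irreducible.
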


\begin{proof} We shall prove there are only finitely many \textit{irreducible} covers. 
    For otherwise, let us assume there exists an infinite sequence  $\{B_i\}_{i\in \mathbb{N}}$ of \textit{irreducible} uniform covers.
    Since there are only finitely many partitions of $\{1,2,\ldots, n\}$, we may pass to an infinite subsequence $\{B_{l_i}\}_{i\in \mathbb{N}}$ of uniform covers of the same partition of $\{1,2,\ldots,n\}$. 
Now, choose $A \subseteq \{1,2,\ldots,n \}$ and consider the sequence of non-negative integers $\{w_{B_{l_i}}(A)\}_{i \in \mathbb{N}}$, clearly it must contain an infinite non-decreasing subsequence 
$w_{B_{l_{i_1}}}(A)\leq w_{B_{l_{i_2}}}(A)\leq \ldots $. We restrict our attention to this subsequence of the uniform covers $B_{l_{i_1}},B_{l_{i_2}},\ldots$ and iteratively apply the same argument for the remaining subsets of $\{1,2,\ldots,n\}$, always passing to a subsequence of the previous sequence of uniform covers. After we have done it for every subset of $\{1,2,\ldots,n\}$, we must end up with two  distinct \textit{irreducible} uniform covers (actually an infinite sequence)  $\mathcal{A}, \mathcal{B}$ for which $w_{\mathcal{A}}(F)\leq w_{\mathcal{B}}(F)$ for every $F\subseteq \{1,2,\ldots,n\}$. This implies $\mathcal{A}\subseteq \mathcal{B}$, which is a contradiction. Take $f(n)$ to be the maximum $r_1$ over all \textit{irreducible} uniform covers of $\{1,2,\ldots,n\}$. 
\end{proof}
\begin{lemma}
    \label{lemma_bipartite}
    For every $n \in \mathbb{N}$, there exists $f(n)$ such that the following holds.
    Let $G=(A,B)$ be a bipartite graph with $A=\{x_1,x_2,\ldots,x_n\}$.
    Then there exists a subset $W\subseteq V(B)$ of size at most $n \cdot f(n)=f^{\prime}(n)$, such that the induced bipartite graph ${G^{\prime}=G[A,(B\setminus W)]}$ has the property that
    \[\text{if } d_{G}(x_i)>d_{G}(x_j)  \text{ then } d_{G}(x_i)-d_{G^{\prime}}(x_i)>d_{G}(x_j)-d_{G^{\prime}}(x_j).
    \]
\end{lemma}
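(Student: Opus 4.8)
The plan is to translate the problem about the bipartite graph $G=(A,B)$ into the language of uniform covers and then apply Lemma~\ref{lemma_cover}. First I would group the vertices of $A$ by degree: let $d_1 > d_2 > \dots > d_t$ be the distinct values of $d_G(x_i)$, and let $A_i = \{\, j : d_G(x_j) = d_i \,\}$ be the corresponding partition of $\{1,2,\dots,n\}$. For each vertex $y \in B$, let $N(y) \subseteq \{1,2,\dots,n\}$ denote the set of indices $j$ with $x_j y \in E(G)$. The multiset $\mathcal{A} = \{\, N(y) : y \in B \,\}$ is then, by construction, an $(r_1,r_2,\dots,r_t)$-uniform cover of $\{1,2,\dots,n\}$ with $r_i = d_i$: every index $j \in A_i$ lies in exactly $d_G(x_j) = d_i$ of the sets $N(y)$. (One subtlety: the definition of uniform cover requires the $r_i$ to be \emph{strictly} decreasing non-negative integers, which holds here precisely because we partitioned $A$ by distinct degrees; if some $A_i$ is empty we simply drop it.)

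Now apply Lemma~\ref{lemma_cover} to obtain a proper sub-cover $\mathcal{B} \subset \mathcal{A}$ that is $(r'_1,\dots,r'_t)$-uniform with $r'_1 \le f(n)$. Writing $\mathcal{B} = \{\, N(y) : y \in B \setminus W \,\}$ for the corresponding set $W \subseteq V(B)$ (taking one $y$ per removed copy), the graph $G' = G[A, B\setminus W]$ has $d_{G'}(x_j) = r'_i$ for every $j \in A_i$, since each index $j \in A_i$ now lies in exactly $r'_i$ of the surviving neighbourhoods. Hence for $i \ne i'$ the ``degree drop'' $d_G(x_j) - d_{G'}(x_j) = d_i - r'_i$ depends only on the class $A_i$, and the desired implication becomes: if $d_i > d_{i'}$ then $d_i - r'_i > d_{i'} - r'_{i'}$, i.e. $d_i - r'_i$ is strictly decreasing in the same order as $d_i$. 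But the sequences $d_1 > d_2 > \dots > d_t$ and $r'_1 > r'_2 > \dots > r'_t$ are \emph{both} strictly decreasing, so $d_i - d_{i+1} \ge 1$ while $r'_i - r'_{i+1} \ge 1$ as well — this is not quite enough on its own, so the argument should instead note that since $r'_1 \le f(n)$ and the $r'_i$ are strictly decreasing, we have $r'_i \le f(n) - (i-1)$, whereas a second application controlling the gaps, or simply choosing $\mathcal{B}$ to additionally have all $r'_i$ bounded, forces $d_i - r'_i > d_{i'} - r'_{i'}$ whenever $d_i$ is sufficiently larger than $d_{i'}$; for the remaining bounded-gap classes one iterates the cover-reduction within each pair.

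For the size bound on $W$: each set $N(y)$ removed is one copy from the multiset $\mathcal{A}$, and $\mathcal{B}$ being an $(r'_1,\dots,r'_t)$-uniform cover with $r'_1 \le f(n)$ means $\sum_{B' \in \mathcal{B}} |B' \cap A_1| = r'_1 |A_1| \le f(n)\,|A_1| \le f(n)\, n$, and since $A_1 \ne \emptyset$ every set in any uniform cover must meet... — more directly, $|\mathcal{B}| \le \sum_i r'_i |A_i| \le f(n)\sum_i |A_i|$ is the wrong direction; instead one bounds $|\mathcal{A}| - |\mathcal{B}|$ by noting $|\mathcal{A}| = \sum_{y} 1$ and comparing coordinate sums: for the class $A_1$, $\sum_{A' \in \mathcal{A}} |A' \cap A_1| = r_1|A_1|$ and $\sum_{B' \in \mathcal{B}} |B' \cap A_1| = r'_1|A_1|$, so the number of removed sets that meet $A_1$ is $(r_1 - r'_1)|A_1|$, and handling all classes gives $|W| \le \sum_i (r_i - r'_i)|A_i|$; bounding this uniformly requires that the removed cover's parameters $r'_i$ are not too small, which again comes from choosing the \emph{irreducible} sub-cover and using that $f(n)$ bounds its top parameter — the cleanest route is to take $\mathcal{B}$ to be a minimal (irreducible) sub-cover, whence $r'_1 \le f(n)$, and observe each vertex class contributes at most $f(n)$ to the per-index multiplicity in $\mathcal A\setminus\mathcal B$, giving $|W| \le n f(n) = f'(n)$ after summing the per-index overcounts $r_i - r'_i \le r_1$.

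The main obstacle I anticipate is exactly this last bookkeeping: Lemma~\ref{lemma_cover} gives control only on $r'_1$, not directly on $|\mathcal{A}\setminus\mathcal{B}|$ nor on the individual differences $r_i - r'_i$, and turning ``the top parameter of the reduced cover is bounded'' into ``few vertices were deleted'' needs the observation that $r'_1 \le f(n)$ already forces $r'_i \le f(n)$ for all $i$ (they are strictly decreasing), so $d_G(x_j) - d_{G'}(x_j) \ge d_G(x_j) - f(n)$ and the \emph{relative} order of the drops is governed by comparing $r_i - r'_i$ across classes — which is where one may need to iterate Lemma~\ref{lemma_cover} a bounded number of times, once to bound $r'_1$ and then within the classes whose original degrees are within $f(n)$ of each other, to repair the strict-inequality requirement. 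I would set $f'(n) = n\cdot f(n)$ as claimed and absorb the bounded iteration into a slightly larger $f(n)$.
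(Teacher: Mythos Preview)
Your setup is exactly the paper's: partition $A$ by degree into classes $A_1,\dots,A_t$ with degrees $r_1>\dots>r_t$, encode each $y\in B$ by its neighbourhood $N(y)\subseteq\{1,\dots,n\}$, observe that $\mathcal{A}=\{N(y):y\in B\}$ is an $(r_1,\dots,r_t)$-uniform cover, and apply Lemma~\ref{lemma_cover} to extract a sub-cover $\mathcal{B}\subset\mathcal{A}$ that is $(r'_1,\dots,r'_t)$-uniform with $r'_1\le f(n)$. The gap is in what you do next: you set $W$ so that $\mathcal{B}$ is what \emph{remains}, i.e.\ $G'$ corresponds to $\mathcal{B}$ and $W$ corresponds to $\mathcal{A}\setminus\mathcal{B}$. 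This is backwards, and it is exactly why both of your difficulties appear. With your choice the degree drop in class $A_i$ is $r_i-r'_i$, and there is no reason for these differences to be monotone in $i$; likewise $|W|=|\mathcal{A}|-|\mathcal{B}|$ is completely uncontrolled, since $|\mathcal{A}|=|B|$ can be arbitrary. Your attempts to repair this by iterating or by bounding gaps cannot succeed in general.

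The paper instead takes $W=\{y\in B:N(y)\in\mathcal{B}\}$, i.e.\ \emph{removes} the small sub-cover and \emph{keeps} $\mathcal{A}\setminus\mathcal{B}$. Then for $j\in A_i$ the number of deleted neighbours of $x_j$ is precisely the number of sets in $\mathcal{B}$ containing $j$, namely $r'_i$; so $d_G(x_j)-d_{G'}(x_j)=r'_i$. Since $r'_1>r'_2>\dots>r'_t$ by definition of a uniform cover, the required implication ``$d_G(x_i)>d_G(x_j)\Rightarrow$ larger drop'' is immediate. For the size bound, $\sum_{S\in\mathcal{B}}|S|=\sum_i r'_i|A_i|\le r'_1\cdot n\le f(n)\cdot n$, so the number of nonempty sets in $\mathcal{B}$ (and hence $|W|$, ignoring isolated vertices of $B$ which are irrelevant) is at most $n\cdot f(n)=f'(n)$. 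No iteration is needed.
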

\begin{proof}
    Partition $A$ into $A_{1},\dots, A_{t}$, so that two vertices belong to the same part if they have the same degree.
    Let $r_{i}$ be the degree of the vertices in $A_{i}$.
    We may assume that $r_{1} > r_{2} > \dots >  r_{t}$.
   The lemma follows as a corollary of Lemma~\ref{lemma_cover}.
    Indeed, for every vertex $w\in B$, let $A_{w}\subseteq \{1,2,\ldots,n\}$ such that $i\in A_{w}$ if $x_i$ is a neighbour of $w$ in $G$. Note that $\mathcal{A}=\{A_{w}: w\in B \}$ is an $(r_{1},r_{2},\ldots, r_{t})$-uniform cover of $\{1,2,\ldots,n\}$. 
    Applying now Lemma~\ref{lemma_cover}, we can find a $(r^{\prime}_1,r^{\prime}_2,\ldots, r^{\prime}_t)$-uniform sub-cover $\mathcal{B}\subseteq \mathcal{A}$ with $r^{\prime}_{1}\leq f(n)$.
    Let $W=\{w\in B: A_w\in \mathcal{B}\}$ and $G^{\prime}=G[A,(B\setminus W)]$. It is easy to see that $|W|\leq n\cdot f(n)$ and that the property is satisfied by the definition of uniform cover. 
\end{proof}

Given a positive integer $k$ and a graph $G$ with the vertex set $\left\{ x_{1}, \dots, x_{n} \right\}$ such that $d(x_{1}) \ge \dots \ge d(x_{n})$, let $r_{k}(G) := \Delta(G) - d_{G}(x_{k})$ be the difference between the maximum degree and the degree of vertex $x_{k}$.

\begin{theorem}
    \label{theorem_1}
    For every positive integer $k$ there exists $h(k)$ such that the following holds.
    If $G$ is a graph on $n$ vertices with maximum degree $\Delta$ then it contains an induced subgraph $H$ on at least $n - (h(k)+k)\sqrt{\Delta}$ vertices, such that $r_{k}(H) \le h(k)\cdot k$.
\end{theorem}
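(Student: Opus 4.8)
The plan is to prove Theorem~\ref{theorem_1} directly, by peeling vertices off the top of $G$ in two stages governed by the size of $r_k$. Write $\Delta=\Delta(G)$. If $\Delta$ is bounded in terms of $k$ the statement is trivial once $h(k)$ is taken large enough (take $H=G$), so assume $\Delta$ is large; fix also a constant $C=C(k)$ to be tuned (in fact $C=k$ will do). \emph{First stage:} while the current graph $G'$ has $r_k(G')>C\sqrt\Delta$, delete the (at least one, at most $k-1$) vertices of $G'$ whose degree strictly exceeds the $k$-th largest degree of $G'$. Each such round deletes fewer than $k$ vertices and lowers the maximum degree by more than $C\sqrt\Delta$, because the new maximum degree is at most the old $k$-th largest degree, which is less than $\Delta(G')-C\sqrt\Delta$. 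Hence this stage stops after fewer than $\sqrt\Delta/C$ rounds, having deleted fewer than $(k-1)\sqrt\Delta/C$ vertices, and it leaves a graph $G'$ with $r_k(G')\le C\sqrt\Delta$.

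If already $r_k(G')\le h(k)k$, output $H=G'$. Otherwise $h(k)k<r_k(G')\le C\sqrt\Delta$. Order the vertices of $G'$ as $x_1,\dots,x_n$ with non-increasing degrees; let $p\le k-1$ be the number of them of degree strictly above $d_{G'}(x_k)$. \emph{Second stage:} bring each of $x_1,\dots,x_{k-1}$ down to degree $d_{G'}(x_k)$ by deleting $d_{G'}(x_i)-d_{G'}(x_k)$ of its neighbours, each time choosing a neighbour outside $N_{G'}(x_k)\cup\{x_1,\dots,x_k\}$. Such neighbours are available since $|N(u)\setminus N(v)|\ge d(u)-d(v)$ for any two vertices, and the total cost is at most $\sum_{i<k}\bigl(d_{G'}(x_i)-d_{G'}(x_k)\bigr)\le(k-1)\,r_k(G')\le(k-1)C\sqrt\Delta$ deletions. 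Because $d_{G'}(x_k)$ was the $k$-th largest degree, after the reduction every vertex except $x_1,\dots,x_{k-1}$ has degree at most $d_{G'}(x_k)$, while $x_k$ is untouched; so, provided the reduction is carried out without pushing any $x_i$ ($i<k$) far past the target, the maximum degree of the resulting graph $H$ equals $d_{G'}(x_k)$ up to an additive $O(k)$, and $x_1,\dots,x_k$ are then $k$ vertices of degree within $O(k)$ of it, i.e.\ $r_k(H)\le O(k)\le h(k)k$. Summing the two stages, $H$ keeps at least $n-(k-1)\bigl(C+1/C\bigr)\sqrt\Delta\ge n-(h(k)+k)\sqrt\Delta$ vertices, on taking $C=k$ and $h(k)$ a large enough polynomial in $k$.

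The delicate point, which I expect to be the main obstacle, is the parenthetical ``without pushing any $x_i$ far past the target'': deleting a neighbour of $x_i$ can simultaneously decrease the degree of another $x_j$ with $j<k$, and since $x_i$ may need to be reduced by as much as $\Theta(\sqrt\Delta)$, if $x_i$ and $x_j$ share many neighbours this collateral loss could drag $x_j$ far below $d_{G'}(x_k)$, leaving us with fewer than $k$ vertices of nearly maximum degree. I would handle this by processing $x_1,\dots,x_{k-1}$ one at a time and, when reducing $x_i$, preferring neighbours of $x_i$ that are not neighbours of the vertices already lowered to the target level (again invoking $|N(u)\setminus N(v)|\ge d(u)-d(v)$ to produce enough of them), and, in the remaining case where several of the high-degree vertices share a large common neighbourhood, deleting \emph{common} neighbours so as to lower them in lockstep. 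To make this coordination rigorous while keeping the number of extra deletions bounded by a constant depending only on $k$, I would apply Lemma~\ref{lemma_bipartite} to the bipartite graph between $\{x_1,\dots,x_{k-1}\}$ and the rest of $G'$: it furnishes a constant-sized set whose removal makes the degree losses among $x_1,\dots,x_{k-1}$ monotone in their degrees, which is precisely what is needed to keep all of $x_1,\dots,x_k$ within $O(k)$ of the new maximum degree.
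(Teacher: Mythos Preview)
Your first stage is essentially the paper's Claim~1 (with the harmless variation of deleting only the vertices strictly above the $k$-th largest degree rather than the top~$k$). The gap is in your second stage. You correctly identify the obstacle: when you delete neighbours of $x_i$ to lower its degree to $d_{G'}(x_k)$, you may simultaneously lower the degree of some other $x_j$, and since up to $(k-1)r_k(G')\le (k-1)C\sqrt\Delta$ vertices are deleted in total, this collateral loss can be of order $\sqrt\Delta$, not $O(k)$. Your proposed cure --- one application of Lemma~\ref{lemma_bipartite} --- does not close the gap. That lemma removes a set $W$ of size at most $f'(k)$, so every $d(x_i)$ changes by at most $f'(k)$; a single application therefore reduces $r_k$ by at most a constant depending only on $k$, nowhere near the $\Theta(\sqrt\Delta)$ reduction you need. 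The monotonicity conclusion of the lemma (``higher-degree vertices lose strictly more'') concerns these $O_k(1)$-sized losses only; it says nothing about how to coordinate a deletion set of size $(k-1)C\sqrt\Delta$ of your own choosing, so it cannot be invoked to control the direct reduction you describe.

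What the paper actually does is to replace the direct second stage entirely by an \emph{iteration} of Lemma~\ref{lemma_bipartite}. After the first stage one has $r_k\le\sqrt\Delta$. As long as $r_k> k\cdot h(k)$ (so that among $d(x_1)\ge\dots\ge d(x_k)$ some consecutive gap exceeds $h(k)$), one applies Lemma~\ref{lemma_bipartite} to the bipartite graph between $\{x_1,\dots,x_k\}$ and the rest, removes the resulting set $W$ with $|W|\le h(k)=f'(k)$, and checks that $r_k$ \emph{strictly} decreases (this is Claim~2 in the paper, and it is precisely here that the monotonicity of losses is used). Since $r_k$ starts at most $\sqrt\Delta$ and drops by at least~$1$ per round, there are at most $\sqrt\Delta$ rounds, removing at most $h(k)\sqrt\Delta$ vertices in total. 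This iterative mechanism is what is missing from your argument.
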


\begin{proof}
    The proof consists of two parts.
    Firstly, we shall show that we can remove at most $k\sqrt{\Delta}$ vertices from $G$ so that in the remaining graph $H'$ we have $r_{k}(H') \le \sqrt{\Delta}$.
    Then we iteratively apply Lemma~\ref{lemma_bipartite} (at most $\sqrt{\Delta}$ times) in order to obtain an induced subgraph $H$ of $H'$ on at least $n-\left(h(k) + k\right)\sqrt{\Delta}$ vertices such that $r_{k}(H) \le h(k)\cdot k$.
    We may take $h(k)$ to be $f^{\prime}(k)$ from Lemma~\ref{lemma_bipartite}.

    We start by showing there is a large induced subgraph $H' \subseteq G$ with $r_{k}(H') \le \sqrt{\Delta}$.
    \begin{claim}
        \label{claim_1}
        There is an induced subgraph $H'$ of $G$ on at least $n - k\sqrt{\Delta}$ vertices such that $r_{k}(H') \le \sqrt{\Delta}$.
    \end{claim}
    \begin{proof}[Proof of Claim~\ref{claim_1}]
        Consider the following procedure.
        Let $G_{0} = G$ and suppose that ${G_{0} \supset \dots \supset G_{i}}$ have been defined.
        If $G_{i}$ does not have the required property then let $G_{i+1}$ be obtained from $G_{i}$ by removing $k$ vertices with largest degrees in $G_i$.
        Notice that $\Delta(G_{i+1}) \le \Delta(G_{i}) - \sqrt{\Delta}$ and $|G_{i+1}| = |G_{i}| - k$.
        Observe that the procedure will stop after at most $\sqrt{\Delta}$ steps, as otherwise the obtained graph would have maximum degree $0$.
        Since $|G_{i}| \ge n - i\cdot k$ we have that $|H'| \ge n - k\sqrt{\Delta}$.
    \end{proof}

    We now proceed to the second part of the proof and iteratively apply Lemma~\ref{lemma_bipartite}.
    In each step we remove at most $h(k)$ vertices from $H'$ while decreasing the value of $r_{k}$ and we stop when $r_{k}$ is at most $ k \cdot h(k)$.

    Let $H_{0} = H'$ and suppose that $H_{0}, \dots, H_{i}$ have already been defined.
    If $r_{k}(H_{i}) \le k \cdot h(k)$ then we are done, so we may assume that $r_{k}(H_{i}) > k\cdot h(k)$.
    Let $A = \left\{x_{1}, \dots, x_{k}  \right\}$ be a set of $k$ vertices with the largest degrees in $H_{i}$ and write $B$ for $H_{i} \setminus A$.
    Without loss of generality we may assume that $d_{H_{i}}(x_{1}) \ge \dots \ge d_{H_{i}}(x_{k})$.
    Since $r_k(H_i) \geq k \cdot h(k)$ there must exist $l \in \left\{ 2, \dots, k \right\}$ such that $d_{H_{i}}(x_{l}) >  d_{H_{i}}(x_{l-1}) + h(k)$.
    Now consider the bipartite subgraph $K = H_{i}[A, B]$. By Lemma~\ref{lemma_bipartite}, with $G = K$ and $n=k$, we can remove a set $W\subset B$ of at most $f^{\prime}(k)=h(k)$ vertices from $B$, and obtain $K' = H_i[A, (B \setminus W)]$ such that
    \begin{equation}
        \label{eq_1}
        \text{for any } x, y \in A\text{, if } d_{K}(x) < d_{K}(y)\text{ then }d_{K}(x) - d_{K'}(x) < d_{K}(y) - d_{K'}(y).
    \end{equation}
    Let $H_{i+1} = H_{i} \setminus W$ (hence $|H_{i+1}| \ge |H_{i}| - |W| \ge |H_{i}| - h(k)$).
    The following claim asserts that the above procedure will stop after at most $\sqrt{\Delta}$ steps.
    \begin{claim}
        \label{claim_2}
        $r_{k}(H_{i+1}) < r_{k}(H_{i})$.
    \end{claim}
    \begin{proof}[Proof of Claim~\ref{claim_2}]
        Let $z$ be a vertex with the maximum degree and $w$ a vertex with the $k$'th largest degree in $H_{i+1}$.
        Observe that $z = x_{t}$ for some $t \ge l$ and $d_{H_{i+1}}(w) \ge d_{H_{i+1}}(x_{s})$ for some $s < l$.
        First, notice that $d_{H_{i}}(x_{t}) - d_{H_{i}}(x_{s}) \le d_{H_{i}}(x_{1}) - d_{H_{i}}(x_{k}) = r_{k}(H_{i})$.
        Hence, $r_{k}(H_{i+1}) = d_{H_{i+1}}(z) - d_{H_{i+1}}(w) \le d_{H_{i+1}}(x_{t}) - d_{H_{i+1}}(x_{s}) < d_{H_{i}}(x_{t}) - d_{H_{i}}(x_{s}) \le r_{k}(H_{i})$,
        where the strict inequality follows from (\ref{eq_1}) since $d_{K}(x_{t}) > d_{K}(x_{s})$.
    \end{proof}

    As in each iteration the value of $r_{k}$ decreases, we must stop after at most $r_{k}(H') = \sqrt{\Delta}$ steps thus getting an induced subgraph $H \subset H'$ with $r_{k}(H) \le k\cdot h(k)$ and $|H| \ge |H'| - h(k)\sqrt{\Delta}\geq n-(h(k)+k)\sqrt{\Delta}$.
\end{proof}

In order to prove Theorem~\ref{theoremsamedegree} we need the following theorem of Caro, Shapira and Yuster, appearing in \cite{YusterShapiraCaro}, whose proof is inspired by the one used by Alon and Berman in \cite{AlonBerman}. 

\begin{theorem}
    \label{lemma_caro}
    For positive integers $r, d, q,$ the following holds.
    Any sequence of $n \ge \left( \lceil q/r\rceil + 2 \right)\left( 2rd + 1 \right)^{d}$ elements of $\left[ -r, r \right]^{d}$ whose sum, denoted by $z$, is in $\left[ -q, q \right]^{d}$ contains a subsequence of length at most $\left( \lceil q/r\rceil +2\right)\left( 2rd + 1 \right)^{d}$ whose sum is $z$.
\end{theorem}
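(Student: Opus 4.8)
\noindent\textbf{Proof proposal for Theorem~\ref{lemma_caro}.}
Write $N=\left(\lceil q/r\rceil+2\right)\left(2rd+1\right)^d$ and treat the elements of the sequence as integer vectors of $[-r,r]^d$ (the intended reading, to which the bound $(2rd+1)^d$ refers). The plan is to recast the statement as a zero-sum statement: since deleting from the sequence a sub-multiset of zero sum changes neither the total sum nor membership in $[-q,q]^d$, it suffices to prove
\begin{quote}
    every multiset $M$ of integer vectors of $[-r,r]^d$ with $|M|>N$ and $\sum_{v\in M}v\in[-q,q]^d$ contains a nonempty sub-multiset of zero sum.
\end{quote}
Granting this, I would start from the original sequence, which has sum $z\in[-q,q]^d$, and repeatedly delete a nonempty zero-sum sub-multiset as long as more than $N$ terms survive. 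The number of terms strictly decreases while the sum stays equal to $z$, so the process terminates with at most $N$ terms, and these form a subsequence summing to $z$. (When $z$ is the zero vector, the empty subsequence already works.)

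To prove the displayed claim I would first decompose the target. Put $w:=\sum_{v\in M}v\in[-q,q]^d$ and $T:=\lceil q/r\rceil$. Since $Tr\ge q$, each coordinate $w_i$ satisfies $|w_i|\le Tr$, and the sums of $T$ integers from $[-r,r]$ are exactly the integers of $[-Tr,Tr]$; carrying this out coordinatewise gives $w=y_1+\dots+y_T$ with each $y_t$ an integer vector of $[-r,r]^d$. Now set $W:=M\cup\{-y_1,\dots,-y_T\}$, a multiset of $|M|+T>N+T$ integer vectors of $[-r,r]^d$ whose total sum is the zero vector. Invoking (as a black box) the Steinitz lemma in the $\ell_\infty$ norm, in the sharp form used by Alon and Berman \cite{AlonBerman}, I reorder $W$ as $w_1,\dots,w_{m'}$, $m'=|M|+T$, so that every partial sum $\sigma_j:=w_1+\dots+w_j$ (with $\sigma_0=0$) lies in the box $[-rd,rd]^d$, which contains exactly $(2rd+1)^d$ integer points. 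As $m'+1>N+T+1>\left(\lceil q/r\rceil+2\right)(2rd+1)^d=(T+2)(2rd+1)^d$, the pigeonhole principle yields an integer point of the box that equals $\sigma_j$ for at least $T+2$ indices $j$; writing these as $\sigma_{j_0}=\dots=\sigma_{j_{T+1}}$ with $j_0<\dots<j_{T+1}$, the $T+1$ blocks $\{w_{j_a+1},\dots,w_{j_{a+1}}\}$ ($0\le a\le T$) are pairwise disjoint nonempty sub-multisets of $W$, each of zero sum. Only $T$ of the vectors of $W$ are among the $-y_t$, so at least one of these blocks avoids all of them, hence is a sub-multiset of $M$; it is the required nonempty zero-sum sub-multiset.

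Everything after the Steinitz step is bookkeeping, so the single place where genuine work is hidden is that reordering, and two things there need care. First, one must use the \emph{sharp} $\ell_\infty$ Steinitz constant $d$, so that the partial sums are confined to $[-rd,rd]^d$ with precisely $(2rd+1)^d$ lattice points; a worse constant would inflate the base of the exponential and break the bound. Second, the factor $\lceil q/r\rceil+2$ in $N$ is exactly what forces the pigeonhole multiplicity past $T+1$, which is in turn what guarantees a zero-sum block clean of all $T$ correction vectors $-y_t$; shrinking it below roughly $q/r$ would fail for that reason. Accordingly, if one wanted a fully self-contained account, I expect the main obstacle to be reproving the $\ell_\infty$ Steinitz lemma with the optimal constant, rather than anything in the pigeonhole argument above.
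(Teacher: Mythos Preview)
Your argument is correct. Note, however, that in the paper this theorem is not proved at all: it is quoted as a result of Caro, Shapira and Yuster \cite{YusterShapiraCaro}, with the remark that its proof is inspired by the Alon--Berman argument \cite{AlonBerman}. So there is no in-paper proof to compare against.

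That said, what you have written is essentially the Alon--Berman/Steinitz argument that the cited paper uses: augment the multiset by $T=\lceil q/r\rceil$ ``correction'' vectors so that the total becomes zero, invoke the sharp $\ell_\infty$ Steinitz lemma (constant $d$) to confine all partial sums to the $(2rd+1)^d$ lattice points of $[-rd,rd]^d$, and then pigeonhole to obtain $T+1$ disjoint zero-sum blocks, one of which must avoid all $T$ correction vectors. Your reduction to a zero-sum statement (iteratively peeling off zero-sum sub-multisets until at most $N$ terms remain) is clean and equivalent to the direct formulation. The two caveats you flag---that the sharp Steinitz constant is genuinely needed for the stated base $(2rd+1)$, and that the factor $\lceil q/r\rceil+2$ is exactly what pushes the pigeonhole multiplicity past $T+1$---are precisely the places where the constants are tight, so your self-assessment is accurate.
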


As usual, we write $R(k)$ (see e.g. \cite{ModernBollobas}) for the \textit{two coloured Ramsey number}, the least integer $n$ such that in any two colouring of the edges of the complete graph on $n$ vertices, there is a monochromatic $K_{k}$. 

\begin{proof}[Proof of Theorem~\ref{theoremsamedegree}]
    Firstly, we apply Theorem~\ref{theorem_1} with $k = R(k)$ to find a large induced subgraph $G' \subset G$  of order at least $n^{\prime}\geq n-(h(R(k))+R(k)) \sqrt{\Delta}$ and with vertex set $\left\{ x_{1}, \dots, x_{n'} \right\}$ where $d(x_{1})\ge d(x_{2}) \ge \dots \ge d(x_{n'})$ and $d(x_{1}) - d(x_{R(k)}) \le {h(R(k))\cdot R(k) = M}$.
    Now we follow the proof of Theorem~1.1 in \cite{YusterShapiraCaro}.

    By the definition of $R(k)$ we can find a set $S$ of $k$ vertices in $\left\{ x_{1}, \dots, x_{R(k)} \right\}$ that induces either a complete graph or an independent set.

    Without loss of generality, assume that $S = \left\{ v_{n'-k+1}, \dots, v_{n'} \right\}$ and $V(G) \setminus S = \left\{ v_{1}, \dots, v_{n'-k} \right\}$.
    Let $e(v_{i}, v_{j})$ be equal to $1$ if there is an edge between $v_{i}$ and $v_{j}$, and $0$ otherwise.
    We construct a sequence $X$ of $n'-k$ vectors $w_{1}, \dots, w_{n'-k}$ in $\left[ -1, 1 \right]^{k-1}$ as follows.
    The coordinate $j$ of $w_{i}$ is $e(v_{n'-k+j}, v_{i}) - e(v_{n'}, v_{i})$ for $i = 1, \dots, n'-k$ and $ j = 1, \dots,k-1$.
    It is clear that $e(v_{n'-k+j}, v_{i}) - e(v_{n'}, v_{i}) \in [-1, 1]$ as required.
    Consider the sum of all the $j$'th coordinates,

    \begin{align*}
        \sum_{i=1}^{n'-k}\left( e(v_{n'-k+j}, v_{i}) - e(v_{n'}, v_{i}) \right) &= \sum_{i = 1}^{n'-k}e(v_{n'-k+j}, v_{i}) - \sum_{i=1}^{n'-k}e(v_{n'}, v_{i}) \\
        &= (d(v_{n'-k+j}) - a) - (d(v_{n'}) - a) = d(v_{n'-k+j}) - d(v_{n'})\\
        &\le M,
    \end{align*}
    where $a = k-1$ if $G'[S]$ is complete, and $a = 0$ otherwise.
    Hence,
    \[
        z = \sum_{i=1}^{n'-k}w_{i} \in \left[ -M,M  \right]^{k-1}.
    \]

    By Theorem~\ref{lemma_caro}, with $d = k-1$ and $q = M$, there is a subsequence of $X$ of size at most $(M+2)(2k-1)^{k-1}$ whose sum is $z$.
    Deleting the vertices of $G'$ corresponding to the elements of this subsequence results in an induced subgraph $H \subset G'$ in which all the $k$ vertices of $S$ have the same degree of order at least $\Delta(H) - \left( M + (M+2)(2k-1)^{k-1} \right)$. 
    Choosing $g_1(k)=g_2(k) =  h(R(k))(4k)^{k}$ we conclude the theorem.
\end{proof}

\section{Remarks}\label{constructions}

In the previous section, we proved that every graph contains a large induced subgraph with at least $k$ vertices having the same degree of order almost the maximum degree. Note that Theorem~\ref{theoremsamedegree} is sharp up to the size of the functions $g_1(k)$ and $g_2(k)$. 
Indeed, there are graphs for which one needs to remove ''roughly'' $\frac{k}{2}\sqrt{\Delta}$ vertices to force the remaining subgraph to have $k$ vertices with the same degree "near" the maximum degree. 
For any $k$ and $\Delta$, let $G^{\Delta}$ be the disjoint union of the stars $K_{1, n_{1}}, \dots, K_{1, n_{t}}$, where $n_{i} = i \cdot \sqrt{\Delta}$, for $i \in \left\{ 1, \dots, t = \sqrt{\Delta} \right\}$ and let $G_k^{\Delta}$ be the disjoint union of $k/2$ copies of $G^{\Delta}$.
It is easy to see that, for any constant $D$, one needs to remove at least $\frac{k}{2} \sqrt{\Delta} - \frac{k}{2}D$ vertices from $G_{k}^{\Delta}$ in order to obtain an induced graph $H$ with $k$ vertices of the same degree of order at least $\Delta(H) - D$. 

Whether removing $C(k)\sqrt{\Delta}$ vertices is enough to force the remaining induced subgraph to have at least $k$ vertices attaining the maximum degree remains an interesting open question. 

\section{Acknowledgments}

This project was carried through during our stay in IMT School for Advanced Studies Lucca.
We would like to thank the Institute for their kind hospitality.

\bibliographystyle{acm}
\bibliography{krepeatmax}

\begin{thebibliography}{10}

\bibitem{AlonBerman}
{\sc Alon, N., and Berman, K.}
\newblock Regular hypergraphs, {G}ordon's lemma, {S}teinitz' lemma and
  invariant theory.
\newblock {\em Journal of Combinatorial Theory, Series A 43\/} (1986), 91--97.

\bibitem{Morayne}
{\sc Balister, P., Bollob\'as, B., Lehel, J., and Morayne, M.}
\newblock Repeated degrees in random uniform hypergraphs.
\newblock {\em SIAM Journal on Discrete Mathematics 27}, 1 (2013), 145--154.

\bibitem{ModernBollobas}
{\sc Bollob\'as, B.}
\newblock {\em Modern Graph Theory}.
\newblock Springer, 1998.

\bibitem{Bscott}
{\sc Bollob\'as, B., and Scott, A.}
\newblock Independent sets and repeated degrees.
\newblock {\em Discrete Mathematics 170}, 1 (1997), 41--49.

\bibitem{CaroMalta}
{\sc Caro, Y., Lauri, Y., and Zarb, C.}
\newblock Equating two maximum degrees.
\newblock {\em ArXiv e-prints\/} (apr 2017).

\bibitem{YusterShapiraCaro}
{\sc Caro, Y., Shapira, A., and Yuster, R.}
\newblock Forcing $k$-repetitions in degree sequences.
\newblock {\em Electronic Journal of Combinatorics 16:\#R7\/} (2013).

\bibitem{WestCaro}
{\sc Caro, Y., and West, D.}
\newblock Repetition number of graphs.
\newblock {\em Electronic Journal of Combinatorics 16:\#R7\/} (2009).

\bibitem{CaroYuster}
{\sc Caro, Y., and Yuster, R.}
\newblock Large induced subgraphs with equated maximum degree.
\newblock {\em Discrete Mathematics 310}, 4 (2010), 742--747.

\bibitem{ChenSchelp}
{\sc Chen, G., and Schelp, R.}
\newblock Ramsey problems with bounded degree spread.
\newblock {\em Combinatorics, Probabilty and Computing 2\/} (1993), 263 -- 269.

\bibitem{Erdos}
{\sc Erd\H{o}s, P., Chen, G., Rousseau, C., and Schelp, R.}
\newblock Ramsey problems involving degrees in edge coloured complete graphs of
  vertices belonging to monochromatic subgraphs.
\newblock {\em European Journal of Combinatorics 14}, 3 (1993), 183 -- 189.

\end{thebibliography}

\end{document}